\newtheorem{note}{Note}
\newtheorem{theo}{Theorem}
\newtheorem{defi}{Definition}
\newtheorem{prop}{Proposition}
\newtheorem{coro}{Corollary}
\newtheorem{remark}{Remark}
\newtheorem{example}{Example}
\newcommand{\bbN}{{\mathbb N}}
\newcommand{\bbR}{{\mathbb R}}
\newcommand{\al}{\alpha}
\newcommand{\be}{\beta}
\newcommand{\G}{\Gamma}
\newcommand{\DCF}{ ^{CF}D^\alpha }
\newcommand{\dd}{{\rm d}}
\begin{document}
\begin{center}\emph{}
\LARGE
\textbf{Global Solution to a Nonlinear Fractional Differential Equation for the Caputo--Fabrizio Derivative}
\end{center}

                   \begin{center}
                  {\sc Sabrina D. Roscani$^{1,2}$, Lucas Venturato$^{2}$ and Domingo A. Tarzia$^1$}\\
 $^{1}\,$ CONICET - Depto. Matem\'atica,
FCE, Univ. Austral,\\
 Paraguay 1950, S2000FZF Rosario, Argentina \\
 $^{2}\,$  Depto. Matem\'atica, ECEN, FCEIA, Univ. Nac. de Rosario,\\
 Pellegrini 250, S2000BTP Rosario, Argentina \\

(sabrinaroscani@gmail.com, dtarzia@austral.edu.ar, venturatolucas@gmail.com )
                   \vspace{0.2cm}
       \end{center}
      \small

\noindent \textbf{Abstract: } This paper deals with the fractional Caputo--Fabrizio derivative and some basic properties related.  A computation of this fractional derivative  to power functions is given  in terms of Mittag--Lefler functions. The inverse operator named the fractional Integral of Caputo--Fabrizio is also analyzed. The main result consists in the proof of existence and uniqueness of a global solution to a nonlinear fractional differential equation,  which has been solved previously for short times by Lozada and Nieto (Progr. Fract. Differ. Appl., 1(2):87--92, 2015). The effects of memory as well as the convergence of the obtained results when  $\al \nearrow 1$ (and the classical first derivative is recovered) are analyzed  throughout the paper.

\noindent \textbf{Keywords:} Fractional ordinary differential equations; Caputo--Fabrizio derivative; Mittag--Lefler function; global solution. \\

\noindent \textbf{MSC2010:}  Primary: 26A33, 34A08, 34A12. Secondary 33E12, 34A34.
\section{Introduction}\label{Sec:Intro} 
\noindent Fractional calculus has been developed in the last fifty years cutting across almost all areas of mathematics, both pure and applied. In the field of ordinary differential equations, the fractional derivatives in the Riemann--Liouville sense or in the Caputo sense  where hardly studied. See for example the books \cite{Diethelm, Kilbas,  Podlubny} where existence, uniqueness,  stability  or integral transforms, among others, are treated. \\
The Caputo derivative of order $\al \in (0,1)$ was defined by Caputo in 1967 \cite{Ca:1967} as
\begin{equation}
{^C_a}D^\alpha f(t)=\frac{1}{\Gamma(1-\alpha)}\int_a^t \frac{f'(\tau)}{(t-\tau)^{\alpha}}d\tau.
\label{cap}
\end{equation}
The Caputo derivative is usually considered for modelling process involving memory effects, or diffusion in non-homogeneous domains. Works in this direction are e.g. \cite{BaFr:2005, Io2, KlSo:2005, MK:2000, Pa:2013}. \\
Clearly, from definition (\ref{cap}), the Caputo derivative is the   convolution of the classical derivative with a singular kernel, given by the function  
 \begin{equation}\label{K-Cap}K(t)=\begin{cases}\frac{t^{-\alpha}}{\G(1-\al)} & \text{ if } t\geq 0,\\ 0 &\text{if } t<0 \end{cases}.
 \end{equation}
 We can say that the fractional derivative in the Caputo sense is a generalized weighted backward sum  where the kernel (\ref{K-Cap}) assigns more weight to  the nearest rates of changes of function $f$. \\
In the aim to avoid the singular kernel (\ref{K-Cap}), and motivated by   physical situations related to the need of an exponential kernel in some constitutive equations (see for example the works \cite{CoCo:1941, Zener}), Caputo and Fabrizio defined in 2015 \cite{CaFa:2015} a new fractional derivative with no singular kernel. This fractional derivative named Caputo-Fabrizio derivative, is defined as
\begin{equation}\label{def}
{^C}{^F_a}D^\alpha f(t)=\frac{1}{1-\alpha}\int_a^t f'(\tau) e^{-\frac{\alpha(t-\tau)}{1-\alpha}}\dd\tau
\end{equation}
for every $f\in  W^1(a,b)=\{f\in \mathcal{C}[a,b]/f'\in L^1(a,b)\}$, $-\infty \leq a < b \leq +\infty$, $\alpha \in (0,1)$.\\

The objective of this paper is to provide new formulas for the computation of the fractional derivative (\ref{def}) and to give a global existence and uniqueness of solution  to an initial value problem for a  nonlinear fractional differential equation for the Caputo-Fabrizio fractional derivative.\\
The paper is organized as follows: In Section 2 some useful properties of the Caputo-Fabrizio derivative (\ref{def}) are presented: the convergence to the classical derivatives, the translation formula, the analysis of the inverse operator, the fractional derivation of power functions  in terms of the Mittag--Leffler functions, among others. In Section 3, an initial value problem for the governing equation 
$$  {^C}{^F_a}Df(t)= \varphi(t,f(t)) $$
is considered, and the existence and uniqueness of a global solution is proved by using a previous result of existence for short times given by Losada an Nieto in \cite{LoNi:2015}.

\section{Basic definitions, properties and computations}

Hereinafter we denote by $\DCF$ to the fractional derivative of Caputo--Fabrizio with lower limit $a=0$. 

\begin{defi}\label{CF-orden-n} For every $n\in \mathbb{N}_0$ and $\al \in (0,1)$, the fractional Caputo Fabrizio derivative of order $n+\alpha$ is defined as
\begin{equation}
{^C}{^F_a}D^{(n+\alpha)} f(t):={^C}{^F_a}D^\alpha \left(\frac{d^n}{dt^n} f(t)\right)
\label{eq:5}
\end{equation}
for every $f \in  W^{(n+1)}(a,b)=\{f\in \mathcal{C}^{(n)}[a,b]/f^{(n+1)}\in L^1(a,b)\}$, $-\infty \leq a < b \leq +\infty $.  
\end{defi}

\begin{note} The case $n=0$ is the one given in $(\ref{def})$.
\end{note}

\begin{prop}\label{lateral-limits} Let $\al \in (0,1)$, $n\in\bbN$ and $f \in W^{(n+1)}(a,b)$. Then 
\begin{enumerate}
\item  For every $t\in [a,b)$,  $\displaystyle\lim_{\al\searrow 0}{^C}{^F_a}D^{(\al+n)} f(t)=\int_a^t f^{(n+1)}(\tau)\dd\tau$.
\item  If $f^{(n+1)}$ has finite number of roots in $(a,b)$, then 
 $\displaystyle\lim_{\al\nearrow 1}{^C}{^F_a}D^{(\al+n)} f(t)=f^{(n+1)}(t)$ a.e. $t \in (a,b)$.\\
\end{enumerate} 
In particular:  
\begin{enumerate}[1'.]
\item \label{lim-1} If  $f\in \mathcal{C}^{(n+1)}[a,b]$ then $\displaystyle\lim_{\al\searrow 0}{^C}{^F_a}D^{(\al+n)} f(t)=f^{(n)}(t)- f^{(n)}(a)$ for all $t \in [a,b]$.
\item \label{lim-2} If  $f\in \mathcal{C}^{(n+2)}[a,b]$  then $\displaystyle\lim_{\al\nearrow 1}{^C}{^F_a}D^{(\al+n)} f(t)=f^{(n+1)}(t)$ for all  $t \in (a,b]$.
\end{enumerate}
\end{prop}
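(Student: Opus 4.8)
The plan is to work directly from the explicit kernel representation. Writing $g=f^{(n)}$, so that $g'=f^{(n+1)}\in L^1(a,b)$ while $g\in\mathcal{C}[a,b]$, Definition~\ref{CF-orden-n} gives
\[
{^C}{^F_a}D^{(\al+n)} f(t)=\frac{1}{1-\al}\int_a^t g'(\tau)\,e^{-\frac{\al(t-\tau)}{1-\al}}\,\dd\tau .
\]
Both assertions reduce to understanding the kernel $K_\al(t-\tau)=\frac{1}{1-\al}e^{-\frac{\al(t-\tau)}{1-\al}}$ on $[a,t]$ in the two regimes $\al\searrow 0$ and $\al\nearrow 1$.

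For the first assertion I would argue by dominated convergence. On $[a,t]$ one has $0\le e^{-\al(t-\tau)/(1-\al)}\le 1$, so the integrand is dominated in modulus by $|g'(\tau)|\in L^1(a,t)$, a bound independent of $\al$. Since $\frac{1}{1-\al}\to 1$ and $e^{-\al(t-\tau)/(1-\al)}\to 1$ pointwise as $\al\searrow 0$, passing to the limit under the integral sign yields $\int_a^t g'(\tau)\,\dd\tau=\int_a^t f^{(n+1)}(\tau)\,\dd\tau$, which is assertion~1. If moreover $f\in\mathcal{C}^{(n+1)}[a,b]$, then $f^{(n+1)}$ is continuous and the fundamental theorem of calculus turns this into $f^{(n)}(t)-f^{(n)}(a)$, giving assertion~1$'$.

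The delicate part is the limit $\al\nearrow 1$, where $1-\al\to 0^+$. Here the total mass $\int_a^t K_\al(t-\tau)\,\dd\tau=\frac{1}{\al}\bigl(1-e^{-\al(t-a)/(1-\al)}\bigr)$ tends to $1$ for $t>a$, while the kernel concentrates at $\tau=t$; thus $K_\al$ behaves as an approximate identity, and I expect the integral to tend to $g'(t)=f^{(n+1)}(t)$. To make this rigorous I would split $\int_a^t=\int_a^{t-\delta}+\int_{t-\delta}^t$ for a fixed small $\delta>0$. On the outer piece the exponential is bounded by $e^{-\al\delta/(1-\al)}$, and a short estimate shows $\frac{1}{1-\al}e^{-\al\delta/(1-\al)}\to 0$ (the exponential decay beats the factor $\frac{1}{1-\al}$), so that contribution vanishes. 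On the inner piece I would compare $g'(\tau)$ with $g'(t)$, using that $\int_{t-\delta}^t K_\al(t-\tau)\,\dd\tau\to 1$ and controlling the remainder by the oscillation of $g'$ in a one-sided neighbourhood of $t$.

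This last step is the main obstacle, since $f^{(n+1)}$ is only $L^1$ and need not be continuous. I expect the hypothesis that $f^{(n+1)}$ has finitely many roots in $(a,b)$ to be exactly what is invoked here: it forces $g'$ to have a definite sign on each of finitely many subintervals, and hence to be well behaved (continuous) at almost every $t$, which is the pointwise information the concentration argument needs. This should deliver assertion~2 for a.e.\ $t$. Finally, when $f\in\mathcal{C}^{(n+2)}[a,b]$ the derivative $f^{(n+1)}$ is $\mathcal{C}^1$ and thus continuous at every $t\in(a,b]$; the oscillation term is then controlled at each such point, which upgrades the almost-everywhere statement to the everywhere statement of assertion~2$'$.
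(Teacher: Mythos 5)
Your proofs of assertions 1 and 1' are correct and coincide with the paper's own argument (dominated convergence, then the fundamental theorem of calculus), and your proof of 2' is also sound: at a point where $f^{(n+1)}$ is continuous, the splitting $\int_a^{t-\delta}+\int_{t-\delta}^t$ together with the estimates you indicate does close (the paper obtains 2' differently, by integrating by parts and letting $\al\nearrow 1$ in the resulting identity, but both routes work). The problem is assertion 2, exactly at the step you yourself flagged as the main obstacle.

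There you assert that finitely many roots of $f^{(n+1)}$ force it ``to have a definite sign on each of finitely many subintervals, and hence to be well behaved (continuous) at almost every $t$''. Both halves of this inference fail for a function that is merely $L^1$. Finiteness of the zero set does not give intervals of constant sign (take $g'=1$ on a measurable set $A$ and $g'=-1$ off $A$: no roots at all, yet no interval of constant sign for a generic $A$); and even strict positivity does not give a.e.\ continuity: with $C\subset(a,b)$ a fat Cantor set (compact, nowhere dense, $|C|>0$), the function $g'=1+\chi_C$ is in $L^1(a,b)$, has no roots, is discontinuous at every point of $C$, and is not a.e.\ equal to any continuous function. So your concentration argument, as justified, does not apply at a.e.\ $t$. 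The repair is to replace ``continuity at a.e.\ $t$'' by ``Lebesgue point'': since $K_\al(s)=\frac{1}{1-\al}e^{-\al s/(1-\al)}$ is nonnegative, decreasing in $s$, with mass at most $1/\al$, the standard estimate for monotone approximate identities bounds the inner piece by a multiple of $\sup_{0<s\le\delta}\frac{1}{s}\int_0^s\left|g'(t-\sigma)-g'(t)\right|\dd\sigma$, which tends to $0$ as $\delta\to 0$ at every Lebesgue point of $g'$; the Lebesgue differentiation theorem then gives assertion 2 at a.e.\ $t$ --- in fact without the finite-roots hypothesis at all. Note that this is a genuinely different argument from the paper's, which instead proves convergence in $L^1(a,b)$ of ${^C}{^F_a}D^{(\al+n)}f$ to $f^{(n+1)}$: it is there that the finite-roots hypothesis enters, to split $(a,b)$ into finitely many subintervals on which the error keeps a constant sign, compute each signed integral explicitly by Fubini, and pass to the limit by dominated convergence. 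Your pointwise route, once repaired as above, is cleaner and yields a slightly stronger conclusion.
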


\begin{proof}\textsl{1.} According to definition $\ref{CF-orden-n}$, it is sufficient to consider the case $n=0$. Let $f\in W^1(a,b)$ be. Note that
$$\left| \frac{1}{1-\al}\int_a^tf'(\tau) e^{-\frac{\alpha(t-\tau)}{1-\alpha}}\dd\tau - \int_a^tf'(\tau)\dd\tau \right|= 
\left| \frac{1}{1-\al}\int_a^tf'(\tau)\left[ e^{-\frac{\alpha(t-\tau)}{1-\alpha}}-(1-\al)\right]\dd\tau\right|.$$
Then, taking the limit when $\alpha$ tends to zero and  applying Lebesgue Convergence theorem, limit $1$ holds for every $t\in [a,b)$.\\

\textsl{2.} Now we take the $L^1$ norm, given by
$$\left|\left|f\right|\right|_{L^1(a,b)}=\int_a^b \left| f(t)\right|\dd t.$$
Let $g(t)=\dfrac{1}{1-\al}\displaystyle\int_a^t f'(\tau)e^{-\frac{\alpha(t-\tau)}{1-\alpha}}\dd \tau - f'(t)$ be. Being $f'$ a continuous function in $(a,b)$ it follows that $g$ is a continuous function in $(a,b)$. Also, taking into account that $f'$ has a finite number of roots, we can divide the interval $(a,b)$ in $M$ subintervals where $g$ conserves its sign in every subinterval $(a_i, a_{i+1})$ for all $i=0,..., M-1$, $a_0=a$ and $a_M=b$. \\
The $L^1$ norm becomes
\begin{equation}{\label{conv-1}}
\left|\left|g\right|\right|_{L^1(a,b)}=\int_a^b \left| g(t)\right|\dd t=\sum_{i=0}^{M-1}\int_{a_i}^{a_{i+1}} |g(t)|\dd t=\sum_{i=0}^{M-1}(-1)^{k_i}\int_{a_i}^{a_{i+1}} g(t)\dd t
\end{equation}

where $k_i=\begin{cases} 0, & \text{if } \, g(t)\geq 0 \, \text{ in } \, (a_i, a_{i+1})\\
1, & \text{if } \, g(t)< 0 \, \text{ in } \, (a_i, a_{i+1}) \end{cases}$,  for every $i=0,...,M-1$.

Applying Fubini's Theorem in each subinterval,
\begin{equation*}
\begin{split}
&\int_{a_i}^{a_{i+1}}g(t)\dd t \\
&=\int_{a_i}^{a_{i+1}}\left[\frac{1}{1-\al}\int_a^t f'(\tau)e^{-\frac{\alpha(t-\tau)}{1-\alpha}}\dd \tau - f'(t)\right]\dd t \\
&= \int_{a_i}^{a_{i+1}}\int_a^t \frac{1}{1-\al}f'(\tau)e^{-\frac{\alpha(t-\tau)}{1-\alpha}}\dd \tau\dd t - \int_{a_i}^{a_{i+1}}f'(t)\dd t \\
&= \int_{a}^{a_{i}}\int_{a_i}^{a_{i+1}} \frac{1}{1-\al}f'(\tau)e^{-\frac{\alpha(t-\tau)}{1-\alpha}}\dd t\dd \tau + \\
&\quad  + \int_{a_i}^{a_{i+1}}\int_\tau^{a_{i+1}} \frac{1}{1-\al}f'(\tau)e^{-\frac{\alpha(t-\tau)}{1-\alpha}}\dd t\dd \tau- \int_{a_i}^{a_{i+1}}f'(\tau)\dd \tau \\
\end{split}
\end{equation*}

\begin{equation}\label{conv-2}
\begin{split}
&=\int_{a}^{a_{i}}f'(\tau)\left[-\frac{1}{\al}\left(e^{-\frac{\alpha(a_{i+1}-\tau)}{1-\alpha}}-  e^{-\frac{\alpha(a_{i}-\tau)}{1-\alpha}}  \right) \right] \dd \tau + \\
&\quad  \int_{a_i}^{a_{i+1}}f'(\tau)e^{\frac{\alpha\tau}{1-\alpha}}\left(\int_\tau^{a_{i+1}} \frac{1}{1-\al}e^{-\frac{\alpha t}{1-\alpha}}\dd t\right)\dd \tau - \int_{a_i}^{a_{i+1}}f'(\tau)\dd \tau \\
& =\int_{a}^{a_{i}}f'(\tau)\left[-\frac{1}{\al}\left(e^{-\frac{\alpha(a_{i+1}-\tau)}{1-\alpha}}-  e^{-\frac{\alpha(a_{i}-\tau)}{1-\alpha}}  \right) \right] \dd \tau +\\
&\quad  + \int_{a_i}^{a_{i+1}}f'(\tau)\left(-\frac{1}{\al}e^{-\frac{\alpha (a_{i+1}-\tau)}{1-\alpha}}+\frac{1}{\al}-1\right)\dd \tau.
\end{split}
\end{equation}
Also, for every $\al\geq\frac{1}{2}$ it holds that 
\begin{equation}\label{conv-3}
\begin{split}
\left|-\frac{1}{\al}e^{-\frac{\alpha (a_{i+1}-\tau)}{1-\alpha}}+\frac{1}{\al}-1\right| &\leq \left|-\frac{1}{\al}e^{-\frac{\alpha (a_{i+1}-\tau)}{1-\alpha}}\right|+\left|\frac{1}{\al}-1\right|\leq\\
&\leq 2e^{-\frac{\alpha (a_{i+1}-\tau)}{1-\alpha}}+1\leq 3
\end{split}
\end{equation}
and 
\begin{equation}\label{conv-3'}
\begin{split}
\left|-\frac{1}{\al}\left(e^{-\frac{\alpha(a_{i+1}-\tau)}{1-\alpha}}-  e^{-\frac{\alpha(a_{i}-\tau)}{1-\alpha}}  \right) \right| \leq 4.
\end{split}
\end{equation}

Then, replacing (\ref{conv-2}) in (\ref{conv-1}) and applying the Lebesgue Convergence Theorem to each part of the finite sum (\ref{conv-1}) (due to inequalities (\ref{conv-3}) and (\ref{conv-3'})), it follows that
\begin{equation}
\begin{split}
\lim\limits_{\al \nearrow 1}\left|\left|{^C}{^F_a}D^{\al} f-f'\right|\right|_{L^1(a,b)}&=\lim\limits_{\al \nearrow 1}\int_a^b \left| {^C}{^F_a}D^{\al} f(t)-f'(t) \right|\dd t =\\
&=\lim\limits_{\al \nearrow 1}\int_a^b \left| \frac{1}{1-\al}\int_a^t f'(\tau)e^{-\frac{\alpha(t-\tau)}{1-\alpha}}\dd \tau-f'(t) \right|\dd t =\\
&=\sum_{i=0}^{M-1}(-1)^{k_i}\int_{a}^{a_{i}}f'(\tau)\lim\limits_{\al \nearrow 1} \left[-\frac{1}{\al}\left(e^{-\frac{\alpha(a_{i+1}-\tau)}{1-\alpha}}-  e^{-\frac{\alpha(a_{i}-\tau)}{1-\alpha}}  \right) \right] \dd \tau +\\
&\quad  + \int_{a_i}^{a_{i+1}}f'(\tau)\lim\limits_{\al \nearrow 1} \left(-\frac{1}{\al}e^{-\frac{\alpha (a_{i+1}-\tau)}{1-\alpha}}+\frac{1}{\al}-1\right)\dd \tau\\
& =0.
\end{split}
\end{equation}

Then 
$$\lim\limits_{\al \nearrow 1}{^C}{^F_a}D^{\al} f(t)=f'(t), \quad \rm{a. e.}\, \, \rm{ in } \,\, (a,b).$$
The limit in \ref{lim-1}' follows by applying Fundamental Theorem of Calculus under the hypothesis that $ f\in \mathcal{C}^{(1)}[a,b]$ in 1. 
Finally, integrating by parts under the assumption that  $f\in\mathcal{C}^{2}[a,b]$ in (\ref{def}) gives
\begin{equation}\label{conv-6}
 {^C}{^F_a}D^{\al} f(t)= f'(t)-f'(a)e^{-\frac{\alpha (t-a)}{1-\alpha}}-\int_a^t\,f''(\tau)e^{-\frac{\alpha (t-\tau)}{1-\alpha}}. 
\end{equation} 
Note that $\lim\limits_{\al\nearrow 1}f'(a)e^{-\frac{\alpha (t-a)}{1-\alpha}}=0$ for every $t>a$. Therefore, taking the limit when $\al \nearrow 1$  in (\ref{conv-6}) the limit in \ref{lim-2}' holds. 
\end{proof}

\begin{note} The previous proposition enables us to redefine the fractional Caputo--Fabrizio derivative given in Definition \ref{def} for every $\al \in (0,1]$. Roughly speaking, we can say that  the fractional Caputo--Derivative is a left--continuous operator at any positive integer.
\end{note}

\begin{remark} We would like to highlight that the convergence given in Proposition \ref{lateral-limits} item \ref{lim-2}', does not necessary holds at the lower extreme $t=a$.  It will be shown in Example \ref{DCF-seno} (see below) that 
\begin{equation}\label{ejemseno}
\DCF \sin t = \frac{1}{(1-\al)^2+\al^2}\left( \al \cos t + (1-\al) \sin t- \al e^{\frac{-\al t }{1-\al} } \right).
\end{equation} 
 From (\ref{ejemseno}) it follows that $\DCF \sin 0=0$ for every $\al \in (0,1)$, whereas that   $\lim\limits_{\al \nearrow 1}\, \DCF \sin t =\cos t$ which tends to $1$ when $t$ tends to $0$.  \end{remark}

\begin{prop}\label{Propiedades} The following properties for the Capueto-Fabrizio derivative hold:

\begin{enumerate}
\item\label{u(0)=0}   If $u \in W^1(a,b)$ and  $f(t)={^C}{^F_a}D^\alpha u(t)$ , then $f(a)=0$.
\item \label{traslacion} Let $g\in W^1(a,b)$ be and  $\alpha \in (0,1)$. Then, for every $a>0$, the following translation formula is valid:
\begin{equation}\label{DF-a}
{^C}{^F_a}D^{\al}g(t)=\, \DCF g(t)- \exp\left\{ \frac{-\al(t-a)}{1-\al} \right\}\,\DCF g(a).
\end{equation}
\end{enumerate}
\end{prop}
\begin{proof}$1.$  Being $u$ a function in $W^1(a,b)$, it yields that 
$u \in \left\{ v \, \in L^1(a,b)\colon \, v' \in L^1(a,b)\right\} $. Also we have that $h(\cdot)=e^{-\frac{\al(t-\cdot)}{1-\al}}$ is a continuous and hence en bounded function in $[a,b]$. Then $u(\cdot)h(\cdot) \in  \left\{ v \, \in L^1(a,b)\colon \, v' \in L^1(a,b)\right\}$ and from Theorem 8.1 of Chapter 8 of Brezis \cite{Brezis:2011} , we have that 
\begin{equation}\label{Barrow} \int_a^t\left(u(\tau)e^{-\frac{\al(t-\tau)}{1-\al}}\right)'\dd \tau =
\left.u(\tau)e^{-\frac{\al(t-\tau)}{1-\al}}\right|_a^t . \end{equation}
Using (\ref{Barrow}) in definition (\ref{def}) it holds that 
$$  f(t)={^C}{^F_a}D^{\al}u(t) =
\frac{1}{1-\al}\left[u(t)-u(a)e^{-\frac{\al(t-a)}{1-\al}}-\int_a^t
u(\tau)e^{-\frac{\al(t-\tau)}{1-\al}}\frac{\al}{1-\al}\dd \tau\right]. $$
Taking the limit when $t \searrow a$ we get that $f(a)=0$. 

$2.$ Relation (\ref{DF-a}) is due to the property of the integral over adjacent intervals.
\end{proof} 

Let us make the inverse reasoning. Suppose that we want to calculate a ``Caputo--Fabrizio  primitive'' of some given function $f$. That is, we want to find a function $u$ such that 
\begin{equation}\label{prim}
{^C}{^F_a}D^{\al} u(t)=f(t).
\end{equation}  
  Following the  procedure described in  \cite{LoNi:2015} (that is,  differentiating (\ref{prim}) respect on time to both sides and  integrating later), from  Proposition \ref{Propiedades}-\ref{u(0)=0} we have 
\begin{equation}\label{pre-int} u(t)-u(a)=\al\int_a^tf(\tau)\dd\tau + (1-\al)[f(t)-f(a)]=\al\int_a^t f(\tau)\dd\tau + (1-\al)f(t).
\end{equation}
Calling   ${^C}{^F_a}I^\al f(t)$ to the right side in (\ref{pre-int}), the Barrow's rule for the fractional integral of Caputo--Fabrizio holds:
\begin{equation}
u(t)-u(a)=\,{^C}{^F_a}I^\al f(t) 
\end{equation} 
and the following definition becomes natural.

\begin{defi} For every $\al \in (0,1]$ and $f \in L^1(a,b)$ the fractional integral of Caputo-Fabrizio of $f$ is defined by 
\begin{equation}\label{int-CF}
{^C}{^F_a}I^\al f(t)=(1-\alpha)f(t)+\alpha \int_a^t f(\tau)\dd\tau, \hspace{2cm} t\geq a.
\end{equation}
\end{defi}

\begin{prop}\label{inverse op} Let $f$ be a function in $L^1(a,b)$ or $W^1(a,b)$ as required. Then
\begin{enumerate}
\item \label{I(D)} The fractional integral of Caputo-Fabrizio is an inverse operator of the fractional derivative of Caputo-Fabrizio if and only if $f(a)=0$. That is, 
$$ {^C}{^F_a}I^\al \left(\,{^C}{^F_a}D^{\al} f(t) \right)= f(t) \Leftrightarrow f(a)=0. $$
\item \label{D(I)-2} The fractional derivative of Caputo-Fabrizio  is an inverse operator of the fractional integral of Caputo-Fabrizio  if and only if $f(a)=0$.
$$ \,{^C}{^F_a}D^{\al} \left(\, {^C}{^F_a}I^\al f(t) \right)= f(t) \Leftrightarrow f(a)=0. $$
\end{enumerate}

\end{prop}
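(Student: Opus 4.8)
The plan is to reduce both biconditionals to two explicit computational identities and then read off the ``if and only if'' statements as immediate consequences. Concretely, I would show that
$$ {^C}{^F_a}I^\al\left({^C}{^F_a}D^{\al} f(t)\right) = f(t) - f(a) \qquad\text{and}\qquad {^C}{^F_a}D^{\al}\left({^C}{^F_a}I^\al f(t)\right) = f(t) - f(a)\exp\left\{\frac{-\al(t-a)}{1-\al}\right\}. $$
Once these are in hand, part \ref{I(D)} follows because $f(t)-f(a)\equiv f(t)$ for all $t$ exactly when $f(a)=0$, and part \ref{D(I)-2} follows because the exponential factor never vanishes, so $f(a)\exp\{\cdots\}\equiv 0$ for all $t$ exactly when $f(a)=0$.

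For the first identity I would substitute $g(t)={^C}{^F_a}D^{\al}f(t)$ into the definition (\ref{int-CF}), giving $(1-\al)g(t)+\al\int_a^t g(s)\,\dd s$. The term $(1-\al)g(t)$ is already $\int_a^t f'(\tau)e^{-\frac{\al(t-\tau)}{1-\al}}\dd\tau$. For the remaining double integral I would apply Fubini's theorem over the triangle $a\le\tau\le s\le t$ to pull out $f'(\tau)$ and integrate the exponential in $s$ explicitly; the inner integral produces the factor $\frac{1-\al}{\al}\left(1-e^{-\frac{\al(t-\tau)}{1-\al}}\right)$, whose $\frac{1-\al}{\al}$ prefactor cancels the $\frac{\al}{1-\al}$ coming from (\ref{int-CF}), leaving $\int_a^t f'(\tau)\left(1-e^{-\frac{\al(t-\tau)}{1-\al}}\right)\dd\tau$. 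Adding the two contributions the exponentials cancel and I am left with $\int_a^t f'(\tau)\,\dd\tau = f(t)-f(a)$.

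For the second identity I would set $h(t)={^C}{^F_a}I^\al f(t)$; differentiating (\ref{int-CF}) gives $h'(\tau)=(1-\al)f'(\tau)+\al f(\tau)$, which requires $f\in W^1(a,b)$ so that $h\in W^1(a,b)$ and ${^C}{^F_a}D^{\al}h$ is defined. Feeding $h'$ into (\ref{def}), the factor $\frac{1}{1-\al}$ splits the integral into $\int_a^t f'(\tau)e^{-\frac{\al(t-\tau)}{1-\al}}\dd\tau$ plus $\frac{\al}{1-\al}\int_a^t f(\tau)e^{-\frac{\al(t-\tau)}{1-\al}}\dd\tau$. Integrating the first term by parts (legitimate via Theorem 8.1 of Brezis \cite{Brezis:2011}, exactly as in the proof of Proposition \ref{Propiedades}) yields the boundary contribution $f(t)-f(a)e^{-\frac{\al(t-a)}{1-\al}}$ together with $-\frac{\al}{1-\al}\int_a^t f(\tau)e^{-\frac{\al(t-\tau)}{1-\al}}\dd\tau$, so this last integral cancels the second term above and only $f(t)-f(a)e^{-\frac{\al(t-a)}{1-\al}}$ survives.

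The two integral manipulations are otherwise routine; the only points demanding genuine care are the justification of the order-swap and of the integration by parts. The main obstacle I anticipate is bookkeeping of the regularity hypotheses, which is precisely why the statement says ``as required'': the Fubini step in the first identity needs integrability of $f'$, supplied by $f\in W^1(a,b)$, while the differentiation $(1-\al)f(t)+\al\int_a^t f$ and the subsequent integration by parts in the second identity again force $f\in W^1(a,b)$ rather than merely $f\in L^1(a,b)$. I would therefore flag explicitly, in each direction, which function space is being invoked so that the hypotheses match the two computations.
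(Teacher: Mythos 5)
Your proposal is correct and takes essentially the same route as the paper: both proofs reduce the two biconditionals to the identities ${^C}{^F_a}I^\al \left(\,{^C}{^F_a}D^{\al} f(t)\right)= f(t)-f(a)$, obtained by Fubini's theorem, and ${^C}{^F_a}D^{\al}\left(\,{^C}{^F_a}I^\al f(t)\right)= f(t)-f(a)\,e^{-\al(t-a)/(1-\al)}$, obtained by integration by parts, after which the equivalences are immediate since the exponential never vanishes. One minor remark: your exponent $-\al(t-a)/(1-\al)$ is the correct one for a general lower limit $a$, whereas the paper's equation (\ref{left-inv}) writes $-\al t/(1-\al)$ (the $a=0$ case), a slip that does not affect the conclusion.
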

\begin{proof}
By using Proposition \ref{Propiedades}-\ref{u(0)=0} and Fubini's theorem it holds that 
$$ {^C}{^F_a}I^\al \left(\,{^C}{^F_a}D^{\al} f(t) \right)= f(t)-f(a), $$
and then $1. $ holds.\\
Integration by parts yields that
\begin{equation}\label{left-inv}
 \,{^C}{^F_a}D^{\al}  \left(\, {^C}{^F_a}I^\al f(t) \right)= f(t)-f(a)\exp\left\{-\frac{\al t}{1-\al}\right\},
\end{equation}  
and then $2.$ holds.
\end{proof}

\begin{note} It is interesting the fact that the fractional derivative ${^C}{^F_a}D^{\al} $ is not always a left  inverse operator of the fractional integral 
${^C}{^F_a}I^\al$, which is not the case when we consider fractional derivatives in the Caputo and Riemann--Liouville sense. In fact, these derivatives  are both left inverse operators of the fractional integral of Rieman--Liouville (see for example  \cite{Diethelm}). \\
However, when $\al \nearrow  1$  we hope to recover, as  we know  that $D^1(I^1 f)=f$ for every integrable function $f$. Making $\al$ tends to 1 in equation (\ref{left-inv}) it holds that, for every $t \in [a,b)$
$$ \lim\limits_{\al \nearrow 1}  \,{^C}{^F_a}D^{\al} \left( \, {^C}{^F_a}I^\al f(t) \right)=\lim\limits_{\al \nearrow 1} \left[ f(t)-f(a)\exp\left\{-\frac{\al t}{1-\al}\right\}\right]=f(t).$$
\end{note}
\medskip
Next, we apply the fractional Caputo--Fabrizio derivative to some classical functions. Also, some graphics related to these computes  are exhibited in in Figures 1 and 
\ref{fig2}.

\begin{prop}Let $\alpha \in (0,1)$ and  $\beta >0$ be. Then
\begin{equation}\label{DCF(t-a)^beta}
{^C}{^F_a}D^\alpha (t-a)^\beta = \frac{\beta}{\alpha}(t-a)^{\beta - 1}\left[1-\Gamma(\beta)E_{1,\beta}\left(-\frac{\alpha}{1-\alpha}(t-a)\right)\right],
\end{equation}
where $E_{\al,\be}(\cdot)$ is the Mittag--Lefler function defined for every $t \in \bbR$ by $$ E_{\al,\be}(t)=\displaystyle\sum_{k=0}^{\infty}\frac{t^k}{\G(\al k+\beta)} $$
and $\Gamma(\cdot) $ is the Gamma function. 
\end{prop}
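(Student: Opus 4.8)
The plan is to evaluate the operator straight from its integral definition (\ref{def}), reduce the resulting integral to a Mittag--Leffler series by integrating a power-series expansion term by term, and then lower the second index of the Mittag--Leffler function from $\beta+1$ to $\beta$ via its standard recurrence. First I would set $f(t)=(t-a)^\beta$, so that $f'(\tau)=\beta(\tau-a)^{\beta-1}$, and observe that this derivative lies in $L^1(a,b)$ precisely because $\beta>0$ (the singularity at $\tau=a$ is integrable), so $f\in W^1(a,b)$ and (\ref{def}) legitimately applies. This gives
$${^C}{^F_a}D^\alpha(t-a)^\beta=\frac{\beta}{1-\alpha}\int_a^t(\tau-a)^{\beta-1}e^{-\frac{\alpha(t-\tau)}{1-\alpha}}\,d\tau.$$
Writing $x=t-a$ and $\lambda=\alpha/(1-\alpha)$ and substituting $w=t-\tau$ turns this into the convolution-type integral $\frac{\beta}{1-\alpha}\int_0^x(x-w)^{\beta-1}e^{-\lambda w}\,dw$.

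Next I would expand $e^{-\lambda w}=\sum_{k\ge0}(-\lambda)^k w^k/k!$ and interchange the sum with the integral; this is justified by the uniform convergence of the exponential series on the compact interval $[0,x]$ together with the integrability of the weight $(x-w)^{\beta-1}$. Each term is then a Beta integral, $\int_0^x(x-w)^{\beta-1}w^k\,dw=x^{\beta+k}B(k+1,\beta)=x^{\beta+k}\,k!\,\Gamma(\beta)/\Gamma(k+1+\beta)$, so the factorials cancel and the whole expression collapses to
$$\frac{\beta\,\Gamma(\beta)\,x^\beta}{1-\alpha}\sum_{k\ge0}\frac{(-\lambda x)^k}{\Gamma(k+1+\beta)}=\frac{\beta\,\Gamma(\beta)\,x^\beta}{1-\alpha}\,E_{1,\beta+1}(-\lambda x),$$
where I recognize the series as the Mittag--Leffler function $E_{1,\beta+1}$ evaluated at $-\lambda x$.

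Finally I would invoke the recurrence $E_{1,\beta}(z)=\frac{1}{\Gamma(\beta)}+z\,E_{1,\beta+1}(z)$, which follows immediately from the defining series by shifting the summation index, to write $E_{1,\beta+1}(-\lambda x)=\frac{1}{\lambda x}\bigl[\frac{1}{\Gamma(\beta)}-E_{1,\beta}(-\lambda x)\bigr]$. Substituting this and simplifying the constants using $(1-\alpha)\lambda=\alpha$ and $x^\beta/x=x^{\beta-1}$ produces exactly the claimed identity, since $\lambda x=\frac{\alpha}{1-\alpha}(t-a)$. The only genuinely delicate points are the integrability check at the lower endpoint when $0<\beta<1$ (needed for the derivative to be defined at all) and the justification of the term-by-term integration; both are routine given $\beta>0$. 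The remaining work is pure bookkeeping with the Gamma/Beta identities and the index shift in the Mittag--Leffler series, so I expect the recognition of $E_{1,\beta+1}$ followed by the recurrence to be the conceptual crux rather than any hard estimate.
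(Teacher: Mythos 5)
Your proof is correct and follows essentially the same route as the paper: expand the exponential kernel as a power series, integrate term by term using the Beta integral $\int_0^x(x-w)^{\beta-1}w^k\,\dd w = x^{\beta+k}\,k!\,\Gamma(\beta)/\Gamma(\beta+k+1)$, and identify the resulting series with a Mittag--Leffler function. The only cosmetic difference is that you name the intermediate series $E_{1,\beta+1}$ and then lower the index via the recurrence $E_{1,\beta}(z)=\frac{1}{\Gamma(\beta)}+z\,E_{1,\beta+1}(z)$, whereas the paper performs the identical index shift directly inside the sum; your explicit verification that $(t-a)^\beta\in W^1(a,b)$ for $0<\beta<1$ is a small point of rigor the paper leaves implicit.
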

 \begin{proof} Recall the Beta function defined by 
\begin{equation}\label{beta}
B(z,w)=\int_0^1t^{z-1}(1-t)^{w-1}\dd t, \qquad z>0,\, w>0.
\end{equation} 
A known property of this function (see p. 10 of  \cite{Erdelyi-V1}) is that 
\begin{equation}\label{Beta-Gamma}
B(z,w)=\frac{\G(z) \G(w)}{\G(z+w)}. 
\end{equation}
From  (\ref{beta}) and (\ref{Beta-Gamma}) easily follows that  
\begin{equation} \label{int-beta}
\int_a^t(\tau-a)^{z-1}(t-\tau)^{w-1}\dd \tau=B(z,w)(t-a)^{z+w-1}=\frac{\G(z)\G(w)}{\G(z+w)}(t-a)^{z+w-1}. 
\end{equation}
Now, by using the uniform convergence of the series we have  
\begin{equation}\label{ehhh}
\begin{split}
{^C}{^F_a}D^\alpha (t-a)^\beta &= \frac{1}{1-\alpha}\int_a^t \beta(\tau-a)^{\beta-1}e^{-\frac{\alpha}{1-\alpha}(t-\tau)}\dd\tau\\
&= \frac{\beta}{1-\alpha}\int_a^t (\tau-a)^{\beta-1} \sum\limits_{k=0}^\infty \frac{(-1)^k }{k!}\left(\frac{\alpha}{1-\alpha}\right)^k(t-\tau)^k \dd\tau\\
&= \frac{\beta}{1-\alpha} \sum\limits_{k=0}^\infty \int_a^t (\tau-a)^{\beta-1} \frac{(-1)^k }{k!}\left(\frac{\alpha}{1-\alpha}\right)^k(t-\tau)^k \dd\tau.\\
\end{split}
\end{equation}
Taking   $z=\be$ and $w=k+1$ in (\ref{int-beta}) and replacing then in  (\ref{ehhh}) we get

\begin{equation*}\label{eehhh}
\begin{split}
{^C}{^F_a}D^\alpha (t-a)^\beta & = \frac{\beta}{1-\alpha} \sum\limits_{k=0}^\infty \frac{(-1)^k }{k!}\left(\frac{\alpha}{1-\alpha}\right)^k\int_a^t (\tau-a)^{\beta-1} (t-\tau)^k \dd\tau\\
&= \frac{\beta}{1-\alpha} \sum\limits_{k=0}^\infty \frac{(-1)^k }{k!}\left(\frac{\alpha}{1-\alpha}\right)^k (t-a)^{\beta+k}\frac{\Gamma(\beta)k!}{\Gamma(\beta+k+1)}\\
&= \frac{\beta\Gamma(\beta)}{\alpha} (t-a)^{\beta-1} \left[-\sum\limits_{k=1}^\infty \frac{\left(-\frac{\alpha}{1-\alpha}(t-a)\right)^k }{\Gamma(k+\beta)}\right]\\
&= \frac{\beta\Gamma(\beta)}{\alpha} (t-a)^{\beta-1} \left[\frac{1}{\Gamma(\beta)}-E_{1,\beta}\left(-\frac{\alpha}{1-\alpha}(t-a)\right)\right]\\
&= \frac{\beta}{\alpha} (t-a)^{\beta-1} \left[1-\Gamma(\beta)E_{1,\beta}\left(-\frac{\alpha}{1-\alpha}(t-a)\right)\right].
\end{split}
\end{equation*}

\end{proof}

\begin{remark} From Eq. (7) of Chapter 18.1 in Erd\'{e}lyi \cite{Erdelyi-V3}  we deduce that  
$$ \lim\limits_{x\rightarrow \infty}E_{1,\be}(-x)=0, \quad \forall \, \be>0.  $$
Then
$$ \lim\limits_{\al \nearrow 1}  \frac{\beta}{\alpha} (t-a)^{\beta-1} \left[1-\Gamma(\beta)E_{1,\beta}\left(-\frac{\alpha}{1-\alpha}(t-a)\right)\right] =\beta (t-a)^{\beta -1}.$$
\end{remark}

\begin{remark} Proposition 1 can be used to give an example of a function $f$ which is not differentiable (in the classical sense) at $t=a$ but it is ``Caputo-Fabrizio differentiable'' at $t=a$. Taking $a=0$ and $\beta={\al/2}$ in (\ref{DCF(t-a)^beta}) we have
\begin{equation}\label{deriv-de-t^al}
\begin{split}
{^C}{^F} D^\alpha t^{\al/2} &= \frac{\al/2}{\alpha}t^{\al/2 - 1}\left[1-\Gamma(\al/2)E_{1,\al/2}\left(-\frac{\alpha}{1-\alpha}t\right)\right]\\
&= \frac{1}{2}t^{\al/2 - 1}\left[1-\Gamma(\al/2)\sum_{k=0}^\infty \frac{\left(-\frac{\alpha}{1-\alpha}t\right)^k}{\Gamma\left(k + \frac{\al}{2}\right)}\right]\\
&= \frac{1}{2}t^{\al/2 - 1}\left[-\Gamma(\al/2)\sum_{k=1}^\infty \frac{\left(-\frac{\alpha}{1-\alpha}t\right)^k}{\Gamma\left(k + \frac{\al}{2}\right)}\right]\\
&=\frac{-\Gamma(\al/2)}{2}\left(\frac{1-\al}{\al}\right)^{\al/2-1}\sum_{k=1}^\infty \frac{(-1)^k\left(\frac{\alpha}{1-\alpha}t\right)^{k+\al/2 - 1}}{\Gamma\left(k + \frac{\al}{2}\right)},\\
\end{split}
\end{equation}

and this function can be defined by $0$ at $t=0$ because $k+\al/2 - 1 >0$ for every $k\geq 1$.

\end{remark}

\begin{coro} If $\alpha \in (0,1)$ and  $m \in \bbN$, then
\begin{equation}\label{deriv-(t-a)^m}
\begin{split}
{^C}{^F_a}D^\alpha (t-a)^m = \frac{m}{\alpha}(t-a)^{m - 1} + \frac{m!}{\al}\displaystyle\sum_{k=0}^{m-2}\left(-\frac{1-\al}{\al}\right)^{m-k-1}\frac{(t-a)^k}{k!}- \\
\frac{m!}{\al}\left(-\frac{1-\al}{\al}\right)^{m-1}\exp\left\{-\frac{\al}{1-\al}(t-a)\right\}.
\end{split}
\end{equation}
\end{coro}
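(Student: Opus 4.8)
The plan is to specialize the general formula (\ref{DCF(t-a)^beta}) to $\be = m \in \bbN$ and then to rewrite the Mittag--Leffler function $E_{1,m}$ — which here has an \emph{integer} second parameter — as an exponential minus a finite Taylor polynomial. Since $\G(m)=(m-1)!$, formula (\ref{DCF(t-a)^beta}) becomes
\begin{equation*}
{^C}{^F_a}D^\al (t-a)^m = \frac{m}{\al}(t-a)^{m-1}\left[1-(m-1)!\,E_{1,m}(-x)\right], \qquad x:=\frac{\al}{1-\al}(t-a),
\end{equation*}
so the whole problem reduces to an explicit evaluation of $E_{1,m}(-x)$.

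First I would establish the key identity by shifting the summation index. Setting $j=k+m-1$,
\begin{equation*}
E_{1,m}(z) = \sum_{k=0}^\infty \frac{z^k}{\G(k+m)} = \sum_{k=0}^\infty \frac{z^k}{(k+m-1)!} = z^{-(m-1)}\left[e^{z} - \sum_{j=0}^{m-2}\frac{z^j}{j!}\right],
\end{equation*}
because the tail $\sum_{j\ge m-1} z^j/j!$ is precisely $e^{z}$ with its first $m-1$ terms removed. Evaluating at $z=-x$ and multiplying by $(m-1)!$ yields a closed form for $(m-1)!\,E_{1,m}(-x)$ in terms of $e^{-x}$ and the polynomial $\sum_{j=0}^{m-2}(-x)^j/j!$.

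Next I would substitute this back and collect terms. The prefactor $\frac{m}{\al}(t-a)^{m-1}$ times $(m-1)!\,(-x)^{-(m-1)}$ collapses nicely, because $x$ is proportional to $(t-a)$: one checks $(t-a)^{m-1}(-x)^{-(m-1)}=\left(-\frac{1-\al}{\al}\right)^{m-1}$ and $\frac{m}{\al}(m-1)!=\frac{m!}{\al}$. The constant $1$ in the bracket then produces the leading term $\frac{m}{\al}(t-a)^{m-1}$; the exponential contributes $-\frac{m!}{\al}\left(-\frac{1-\al}{\al}\right)^{m-1}e^{-x}$, which matches the last summand of (\ref{deriv-(t-a)^m}) since $e^{-x}=\exp\{-\frac{\al}{1-\al}(t-a)\}$; and the polynomial contributes the finite sum $\frac{m!}{\al}\left(-\frac{1-\al}{\al}\right)^{m-1}\sum_{j=0}^{m-2}(-x)^j/j!$. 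Expanding $(-x)^j=(-1)^j\left(\frac{\al}{1-\al}\right)^j(t-a)^j$ and recombining the powers of $-\frac{1-\al}{\al}$ converts each term of this last sum into $\left(-\frac{1-\al}{\al}\right)^{m-k-1}\frac{(t-a)^k}{k!}$, i.e.\ the middle sum of (\ref{deriv-(t-a)^m}).

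The main obstacle is purely the sign and exponent bookkeeping in this final recombination: one must verify that the factor $\left(-\frac{1-\al}{\al}\right)^{m-1}$ carried by the prefactor merges with the $(-1)^j\left(\frac{\al}{1-\al}\right)^j$ coming from $(-x)^j$ to leave the single power $\left(-\frac{1-\al}{\al}\right)^{m-k-1}$ with no stray sign; here the only nontrivial point is the parity identity $(-1)^{m-1+j}=(-1)^{m-1-j}$. Everything else is direct substitution. As an alternative I could argue by induction on $m$, integrating by parts in (\ref{def}) to relate ${^C}{^F_a}D^\al(t-a)^m$ to ${^C}{^F_a}D^\al(t-a)^{m-1}$, but the direct evaluation via the Mittag--Leffler identity above is cleaner and avoids tracking boundary terms.
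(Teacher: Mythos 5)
Your proof is correct and takes essentially the same route as the paper: both specialize (\ref{DCF(t-a)^beta}) to $\beta=m$ and rewrite $E_{1,m}$ via the identity $E_{1,m}(z)=z^{-(m-1)}\left[e^{z}-\sum_{j=0}^{m-2}z^{j}/j!\right]$, the only difference being that you derive this identity by an index shift while the paper quotes it as known. Incidentally, your sign bookkeeping is more careful than the paper's own final display, which writes $\left(\frac{1-\al}{\al}\right)^{m-k-1}$ without the minus signs --- a typo there, since the statement of the corollary (and your computation) correctly carries $\left(-\frac{1-\al}{\al}\right)^{m-k-1}$.
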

In the right side of (\ref{deriv-(t-a)^m}) we see that the first addend is the dominant term that do not tends to zero when $\al$ tends to 1. The second addend is closely related to the memory effect of the operator, and the third term is the ``exponential perturbation'' which is a natural consequence of the considered operator.    
\begin{proof}
Taking into account that  the Mittag--Leffler function  verifies that \linebreak $ E_{1,m}(t)=\frac{1}{t^{m-1}}\left[\exp\left\{t\right\}-\displaystyle\sum_{k=0}^{m-2}\frac{t^k}{k!}\right],$
and replacing it in (\ref{DCF(t-a)^beta})  we have
\begin{equation*}
\begin{split}
& {^C}{^F_a}D^\alpha (t-a)^m = \frac{m}{\alpha}(t-a)^{m - 1}\left[1-\Gamma(m)E_{1,m}\left(-\frac{\alpha}{1-\alpha}(t-a)\right)\right]\\
& = \frac{m}{\alpha}(t-a)^{m - 1}\left\{1-\frac{\Gamma(m)}{\left[-\frac{\al}{1-\al}(t-a)\right]^{m-1}}\left[\exp\left\{-\frac{\al}{1-\al}(t-a)\right\}-\displaystyle\sum_{k=0}^{m-2}\frac{\left[-\frac{\al}{1-\al}(t-a)\right]^k}{k!}\right]\right\}\\
& = \frac{m}{\alpha}(t-a)^{m - 1} + \frac{1}{\al}\displaystyle\sum_{k=0}^{m-2} \frac{m!}{k!}\left(\frac{1-\al}{\al}\right)^{m-k-1}(t-a)^k-\frac{m!}{\al}\left(\frac{1-\al}{\al}\right)^{m-1}\exp\left\{-\frac{\al}{1-\al}(t-a)\right\}
\end{split}
\end{equation*}
\end{proof}

\begin{figure}
\label{fig1}
\hspace{4cm}\includegraphics[width=0.6\textwidth]{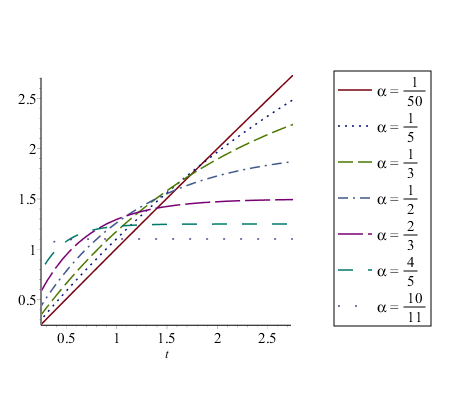}
\caption{$\DCF t$ for some values of $\al$.}
\end{figure}


\begin{example} {The exponential function}\\
It is easy to see that
\begin{equation}\label{DCF-exp}
{^C}{^F}D^\alpha e^{ct}=\begin{cases} \frac{c}{c(1-\alpha)+\alpha}\left[ e^{ct}- e^{-\frac{\alpha t}{1-\alpha}}\right] & \text{ if }  c(1-\alpha)+\alpha\neq 0,\\
\frac{ct}{1-\alpha}e^{\frac{-\alpha t}{1-\alpha}} & \text{ if }   c(1-\alpha)+\alpha = 0.
\end{cases}
\end{equation}

\noindent Taking $c=1$ in (\ref{DCF-exp})

$${^C}{^F}D^\alpha e^{t}=e^{t}-e^{-\frac{\alpha t}{1-\alpha}},$$
and the expected limit holds:
$$\lim\limits_{\al \nearrow 1}{^C}{^F}D^\alpha e^{t}=e^{t}.$$

\noindent Another interesting result is when $a=-\infty$ and $c>-\frac{\alpha}{1-\alpha}$. In fact:

\begin{equation*}
\begin{split}
{^C_{-}}{_{\infty}^F}D^\alpha e^{ct}&= \frac{1}{1-\alpha}\int_{-\infty}^t ce^{c\tau} e^{-\frac{\alpha(t-\tau)}{1-\alpha}}\dd\tau=\\
&=\frac{c}{c(1-\alpha)+\alpha}\left( e^{\frac{c(1-\alpha)t}{1-\alpha}}- \lim\limits_{s\rightarrow -\infty}e^{\frac{(c-c\alpha+\alpha)s -\alpha t}{1-\alpha}}\right)\\
&=\frac{c }{c(1-\alpha)+\alpha} e^{ct}\\
\end{split}
\end{equation*}
which gives  the following special result for $c=1$: 
$${^C_{-}}{_{\infty}^F}D^\alpha e^t=e^t.$$


\end{example}

\begin{example}\label{DCF-seno}{The fractional derivative of the sine function.}

Integrating by parts it holds that 
\begin{equation}
{^C}{^F}D^\alpha \sin(t)=\frac{1}{1-\alpha}\sin(t)  + \frac{\alpha}{(1-\alpha)^2} \cos(t) -\frac{\alpha}{(1-\alpha)^2} e^{-\frac{\alpha t}{1-\alpha}} - \frac{\alpha^2}{(1-\alpha)^2}{^C}{^F}D^\alpha \sin(t)
\end{equation}
Then 
\begin{equation}
{^C}{^F}D^\alpha \sin(t)=\frac{1}{(1-\alpha)^2+\alpha^2}\left(\al\cos(t)+(1-\al)\sin(t) -\al e^{-\frac{\alpha t}{1-\alpha}}\right).
\end{equation}

Noting that $e^{-\frac{\alpha t}{1-\alpha}}$ tend to $0$ when $\alpha \nearrow 1$, it follows that 

$$\lim\limits_{\alpha \nearrow 1}{^C}{^F}D^\alpha \sin{t} = \cos(t).$$

Analogously,
$${^C}{^F}D^\alpha \cos(t)= \frac{1}{(1-\alpha)^2+\alpha^2}\left(-\al\sin(t)+(1-\al)\cos(t) -(1-\al) e^{-\frac{\alpha t}{1-\alpha}}\right)$$
and 
$$\lim\limits_{\alpha \nearrow 1}{^C}{^F}D^\alpha \cos(t)= -\sin(t).$$

\begin{figure}
\label{fig2}
\hspace{4cm}\includegraphics[width=0.5\textwidth]{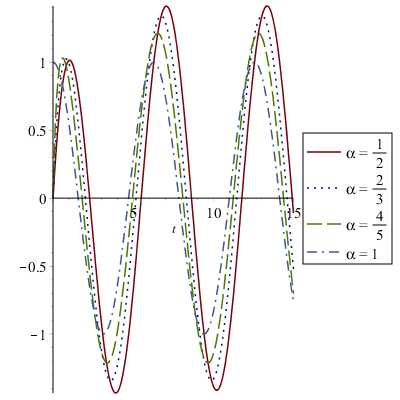}
\caption{$\DCF \sin t$ for some values of $\al$.}
\end{figure}

\end{example}

\section{ Global Solution to a Nonlinear Fractional Differential Equation }

The following Theorem is similar (not equal and the differences will be especified later) to Theorem 1 in the work of Losada and Nieto \cite{LoNi:2015} as well as its proof. 
\begin{theo}\label{short-time-sol} Let $\varphi\colon [a,\infty)\times \bbR \rightarrow \bbR$ be a Lipschitz function respect on the second variable with constant $L$, i.e.
$$ |\varphi(t,s_1)-\varphi(t,s_2)|\leq L|s_1-s_2|\quad \forall \,s_1, s_2 \in \bbR, $$
and let $\al \in (0,1)$ be such that $L<\frac{1}{1-\al}$. Then if $ \varphi(a,a_0)=0$,  problem
\begin{equation}\label{pb0}
\begin{cases} {^C}{^F_a}D^\alpha f(t)= \varphi(t,f(t)), & t>a,\\
f(a)=a_0
\end{cases}
\end{equation}
has a unique solution $f \in \mathcal{C}[a,T]$, for every $T \in \left(a,a+\frac{1-(1-\al)L}{\al L}\right)$. 
\end{theo}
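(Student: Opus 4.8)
The plan is to recast the initial value problem (\ref{pb0}) as a fixed-point equation for an integral operator on $\mathcal{C}[a,T]$ and then invoke the Banach contraction principle, which will produce the bound on $T$ automatically. First I would apply the fractional integral ${^C}{^F_a}I^\al$ to both sides of ${^C}{^F_a}D^\al f=\var(\cdot,f)$. By Proposition \ref{inverse op}.\ref{I(D)} (in the form ${^C}{^F_a}I^\al({^C}{^F_a}D^\al f)=f-f(a)$ established in its proof), together with $f(a)=a_0$ and the definition (\ref{int-CF}), this is equivalent to the integral equation
\begin{equation*}
f(t)=a_0+(1-\al)\var(t,f(t))+\al\int_a^t\var(\tau,f(\tau))\,\dd\tau .
\end{equation*}
I would then define $\mathcal{A}\colon\mathcal{C}[a,T]\to\mathcal{C}[a,T]$ by letting $(\mathcal{A}f)(t)$ be the right-hand side above, and check it is well defined: the continuity of $\var$ in its first variable together with its Lipschitz continuity in the second makes $t\mapsto\var(t,f(t))$ continuous whenever $f$ is, so $\mathcal{A}f\in\mathcal{C}[a,T]$.

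Next I would estimate, for $f,g\in\mathcal{C}[a,T]$ and $t\in[a,T]$, using the Lipschitz hypothesis,
\begin{equation*}
|(\mathcal{A}f)(t)-(\mathcal{A}g)(t)|\leq (1-\al)L\,|f(t)-g(t)|+\al L\int_a^t|f(\tau)-g(\tau)|\,\dd\tau .
\end{equation*}
Bounding the right-hand side by $\big[(1-\al)L+\al L(t-a)\big]\|f-g\|_\infty$ and using $t-a\leq T-a$ gives $\|\mathcal{A}f-\mathcal{A}g\|_\infty\leq\big[(1-\al)L+\al L(T-a)\big]\|f-g\|_\infty$. Hence $\mathcal{A}$ is a contraction on the Banach space $\mathcal{C}[a,T]$ exactly when $(1-\al)L+\al L(T-a)<1$, i.e. when $T<a+\frac{1-(1-\al)L}{\al L}$; the hypothesis $L<\frac{1}{1-\al}$ is precisely what makes the numerator $1-(1-\al)L$ positive, so this interval is nonempty. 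Banach's fixed point theorem then yields a unique $f\in\mathcal{C}[a,T]$ with $\mathcal{A}f=f$.

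Finally I would check that the fixed point solves (\ref{pb0}). Evaluating the integral equation at $t=a$ gives $f(a)=a_0+(1-\al)\var(a,f(a))$; since $s\mapsto a_0+(1-\al)\var(a,s)$ is itself a contraction (because $(1-\al)L<1$) and $\var(a,a_0)=0$ forces $a_0$ to be its fixed point, we obtain $f(a)=a_0$, and therefore $\var(a,f(a))=0$. Applying ${^C}{^F_a}D^\al$ to the integral equation and using Proposition \ref{inverse op}.\ref{D(I)-2} in the form (\ref{left-inv}), the boundary term proportional to $\var(a,f(a))$ vanishes, leaving ${^C}{^F_a}D^\al f(t)=\var(t,f(t))$, as required. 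The condition $\var(a,a_0)=0$ is thus the indispensable ingredient, since by Proposition \ref{Propiedades}.\ref{u(0)=0} it is exactly what lets ${^C}{^F_a}I^\al$ and ${^C}{^F_a}D^\al$ act as genuine inverses here. I expect the contraction estimate to be routine; the delicate point is the equivalence between the differential and integral formulations — in particular confirming that the continuous fixed point is regular enough (e.g. lies in $W^1$) for ${^C}{^F_a}D^\al f$ to be defined, and that the boundary term in (\ref{left-inv}) indeed cancels thanks to $\var(a,a_0)=0$. This is where I would spend the most care.
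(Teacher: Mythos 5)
Your proposal is correct and takes essentially the same route as the paper: the paper's proof of Theorem \ref{short-time-sol} is deferred to Theorem 1 of Losada--Nieto \cite{LoNi:2015}, which is exactly this Banach contraction argument on the equivalent integral equation $f(t)=a_0+(1-\al)\var(t,f(t))+\al\int_a^t\var(\tau,f(\tau))\,\dd\tau$, with the contraction constant $(1-\al)L+\al L(T-a)<1$ yielding the same bound $T<a+\frac{1-(1-\al)L}{\al L}$. Your careful handling of the hypothesis $\var(a,a_0)=0$ when passing back from the integral to the differential formulation (via Proposition \ref{Propiedades}-\ref{u(0)=0} and the boundary term in (\ref{left-inv})) is precisely the point the paper singles out as difference (iv) from the cited result.
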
  

The differences from Theorem \ref{short-time-sol} and \cite[Theorem 1]{LoNi:2015} are: 
\begin{itemize}
\item[i)] The definitions of the  Caputo--Fabrizio  derivative(\ref{def}) and integral (\ref{int-CF}) are different from those considered in \cite{LoNi:2015}.
\item[ii)] The initial time is general (not necessary given by 0).
\item [iii)] The order of differenciation $\alpha$ depends on the Lipschitz constant $L$.     
\item[iv)] The assumption $\varphi(a,a_0)=0$ is imposed by Proposition \ref{Propiedades}-\ref{u(0)=0} (and it is also necessary in the performance of the proof).
\end{itemize}

\begin{theo} Let $\varphi\colon [a,\infty)\times \bbR \rightarrow \bbR$ be a Lipschitz function respect on the second variable with constant $L$,
and let be $\al \in (0,1)$ such that $L<\frac{1}{1-\al}$. \\
Then, problem

\begin{equation}\label{pb1}
\begin{cases} \DCF f(t)= \varphi(t,f(t)), & t>0,\\
f(0)=a_0
\end{cases}
\end{equation}

\noindent has a unique solution $f \in \mathcal{C}[0,T]$, for every finite time  $T$ $\in \bbR^+$, that is, globally in time.

\end{theo}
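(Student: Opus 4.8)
The plan is to upgrade the local result of Theorem \ref{short-time-sol} to a global one by a step-by-step continuation, exploiting the fact that the length of the interval of solvability, $\frac{1-(1-\al)L}{\al L}$, depends only on $\al$ and $L$ and \emph{not} on the initial time $a$ nor on the initial datum $a_0$. Thus, if one can restart the problem at the right endpoint of each interval as a short-time problem of exactly the same type and with the same Lipschitz constant, finitely many steps of this uniform length will cover any prescribed $[0,T]$. There are two obstructions to a naive iteration of Theorem \ref{short-time-sol}: the operator $\DCF$ carries its lower limit fixed at $0$, so it retains the whole past history (memory) and does not simply ``restart'' at $T_k$; and Theorem \ref{short-time-sol} requires the compatibility condition $\varphi(a,a_0)=0$, which will not hold at a generic restart point.

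The key step resolves both obstructions at once through the translation formula. Assume the solution is already known on $[0,T_k]$, and fix a step $h\in\bigl(0,\frac{1-(1-\al)L}{\al L}\bigr)$, setting $T_{k+1}=T_k+h$. For $t\in[T_k,T_{k+1}]$, Proposition \ref{Propiedades}-\ref{traslacion} with lower limit $T_k$ gives
\begin{equation*}
\DCF f(t)={^C}{^F_{T_k}}D^{\al} f(t)+e^{-\frac{\al(t-T_k)}{1-\al}}\,\DCF f(T_k),
\end{equation*}
where $\DCF f(T_k)$ depends only on the already-determined values of $f$ on $[0,T_k]$ and equals $\varphi\bigl(T_k,f(T_k)\bigr)$. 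Hence, on $[T_k,T_{k+1}]$, the equation $\DCF f=\varphi(t,f)$ is equivalent to the genuine Caputo--Fabrizio problem with lower limit $T_k$,
\begin{equation*}
{^C}{^F_{T_k}}D^{\al} f(t)=\psi_k(t,f(t)),\qquad \psi_k(t,s):=\varphi(t,s)-e^{-\frac{\al(t-T_k)}{1-\al}}\varphi\bigl(T_k,f(T_k)\bigr).
\end{equation*}
The new nonlinearity $\psi_k$ is Lipschitz in $s$ with the \emph{same} constant $L$ (the subtracted term is independent of the current value $f(t)$), and, decisively, $\psi_k\bigl(T_k,f(T_k)\bigr)=0$ by construction, so the compatibility hypothesis of Theorem \ref{short-time-sol} holds automatically. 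Applying that theorem with lower limit $T_k$, nonlinearity $\psi_k$, and initial datum $f(T_k)$ produces a unique solution on $[T_k,T_{k+1}]$ that matches the previous piece at $T_k$; reversing the translation formula shows it satisfies the original equation $\DCF f=\varphi(t,f)$ there.

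Gluing and concluding is then routine. Since $h$ is uniform, $N=\lceil T/h\rceil$ steps reach $T$; continuity at each junction $T_k$ and the stepwise construction yield a single $f\in\mathcal{C}[0,T]$ solving (\ref{pb1}). Uniqueness propagates: any two global solutions agree on $[0,T_1]$ by Theorem \ref{short-time-sol}, hence carry identical restart data into the next step and agree on $[T_1,T_2]$, and so on. I expect the main obstacle to be conceptual rather than computational, namely recognising that the memory of $\DCF$ can be absorbed into the exponentially decaying forcing term $-e^{-\al(t-T_k)/(1-\al)}\varphi(T_k,f(T_k))$ in a way that simultaneously preserves $L$ and enforces compatibility; the positivity and uniformity of the step $\frac{1-(1-\al)L}{\al L}$ is precisely what the structural hypothesis $L<\frac{1}{1-\al}$ guarantees. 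As an alternative that avoids continuation altogether, one could reduce (\ref{pb1}) via Proposition \ref{inverse op}-\ref{I(D)} to the fixed-point equation $f(t)=a_0+(1-\al)\varphi(t,f(t))+\al\int_0^t\varphi(\tau,f(\tau))\dd\tau$ and equip $\mathcal{C}[0,T]$ with a weighted norm $\|h\|_\la=\sup_{t}e^{-\la t}|h(t)|$; the estimate $\|\mathcal{F}f-\mathcal{F}g\|_\la\le\bigl[(1-\al)L+\al L/\la\bigr]\|f-g\|_\la$ shows that, because $(1-\al)L<1$, a large enough $\la$ makes $\mathcal{F}$ a contraction on every finite $[0,T]$ in one application of the Banach fixed point theorem.
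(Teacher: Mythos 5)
Your main argument is essentially the paper's own proof: the paper likewise restarts at each junction via the translation formula of Proposition \ref{Propiedades}-\ref{traslacion}, absorbs the memory term $e^{-\frac{\al(t-T_k)}{1-\al}}\,\DCF f(T_k)=e^{-\frac{\al(t-T_k)}{1-\al}}\varphi(T_k,f(T_k))$ into a new nonlinearity with the same Lipschitz constant $L$ that vanishes at the restart point, and then iterates Theorem \ref{short-time-sol} with a uniform step $\Delta T<\frac{1-(1-\al)L}{\al L}$. The only difference in execution is that the paper verifies $\DCF f(T_2)=\varphi(T_2,f(T_2))$ by explicitly splitting the defining integral, whereas you obtain the same identity inductively from the equivalence of the translated and original equations; both are valid.

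Your closing alternative, though, is a genuinely different route from the paper: reducing (\ref{pb1}) to the fixed-point equation $f(t)=a_0+(1-\al)\varphi(t,f(t))+\al\int_0^t\varphi(\tau,f(\tau))\,\dd\tau$ and contracting in the weighted norm $\|h\|_\la=\sup_{t}e^{-\la t}|h(t)|$ gives existence and uniqueness on all of $[0,T]$ in a single application of Banach's theorem, with no continuation, no gluing and no induction; your estimate $\bigl[(1-\al)L+\al L/\la\bigr]\|f-g\|_\la$ is correct, and $(1-\al)L<1$ is exactly what the hypothesis $L<\frac{1}{1-\al}$ provides, so a large $\la$ works. Two caveats if you carry it out: (i) to pass from the integral equation back to the differential equation you need the compatibility condition $\varphi(0,a_0)=0$, since by Proposition \ref{inverse op}-\ref{D(I)-2} (equation (\ref{left-inv})) one has $\DCF\bigl(\,{^C}{^F_0}I^\al g\bigr)(t)=g(t)-g(0)\,e^{-\frac{\al t}{1-\al}}$ with $g(t)=\varphi(t,f(t))$; this condition is in fact forced on any solution by Proposition \ref{Propiedades}-\ref{u(0)=0}, and both the paper's proof and your main argument already use it implicitly when invoking Theorem \ref{short-time-sol} on the first interval, even though it is absent from the statement of the global theorem; (ii) one should also check that the fixed point has the regularity needed to apply $\DCF$ at all, which is the same (unaddressed) issue hidden in the paper's appeal to the short-time theorem. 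In exchange for these checks, the fixed-point route avoids the step-counting and junction bookkeeping entirely.
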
  

\begin{proof}
Let the pair $\left\{ T_1, f_1\right\}$ given by Theorem \ref{short-time-sol} which solves the problem
\begin{equation} 
\begin{cases} 
\DCF f(t)= \varphi(t,f(t)),& t>0,\\
f(0)=a_0
\end{cases}
 \end{equation}
in the interval $[0,T_1]$. Consider next the problem 
\begin{equation}\label{pb2}
 \begin{cases} \DCF f(t)= \varphi(t,f(t)),& t>T_1\\
f(t)=f_1(t)\, & \forall \, t\, \in \, [0,T_1].
\end{cases}
 \end{equation}
By unsing the translation formula given in Proposition \ref{Propiedades}-\ref{traslacion}, it holds that problem (\ref{pb2}) is equivalent to
\begin{equation}\label{pb3} 
\begin{cases}
\,  {^C}{^F_{T_1}}D^\alpha f(t)= \varphi(t,f(t))-e^{-\frac{\al(t-T_1)}{1-\al}}\, \DCF f(T_1),& t>T_1\\
f(t)=f_1(t), &\, \forall \, t\, \in \, [0,T_1].
\end{cases}
 \end{equation}
Also, being $f_1$ the solution to problem $(\ref{pb1})$, problem $(\ref{pb3})$ is equivalent to 
\begin{equation}\label{pb4} \begin{cases}
 {^C}{^F_{T_1}}D^\alpha f(t)= \varphi(t,f(t))-e^{-\frac{\al(t-T_1)}{1-\al}}\varphi(T_1, f_1(T_1)),& t>T_1\\
f(t)=f_1(t)\,&  \forall \, t\, \in \, [0,T_1].
\end{cases}
 \end{equation}

\noindent Let us focus now in the sub-problem related to $(\ref{pb4})$ given by

\begin{equation}\label{pb5} \begin{cases}
 {^C}{^F_{T_1}}D^\alpha f(t)= \Phi(t,f(t)),& t>T_1\\
f(T_1)=f_1(T_1)\,& 
\end{cases}
 \end{equation}
where $\Phi(t,x)=\varphi(t,x) -e^{-\frac{\al(t-T_1)}{1-\al}}\varphi(T_1, f_1(T_1))$. \\

\noindent Being $\varphi$ a Lipschitz function respect on the second variable with constant $L$ and $e^{-\frac{\al(t-T_1)}{1-\al}}\leq 1$ for every $t\geq T_1$, easily follows that $\Phi$ is a Lipschitz function respect on the second variable with constant $L$. By hypothesis $L<\frac{1}{1-\al}$, then we can apply Theorem \ref{short-time-sol} to (\ref{pb5}), and there exists a pair $\left\{T_2, f_2\right\}$ such that  $f_2$ is the unique solution to  problem (\ref{pb5}) in the interval $[T_1,T_2]$, where 
\begin{equation}\label{desT_2}T_2-T_1<\frac{1-(1-\al)L}{\al L}.
\end{equation}\\
Note that the same argument can be used to obtain a solution to problem 
\begin{equation}\label{pb6} \begin{cases}
 {^C}{^F_{T_2}}D^\alpha f(t)= \varphi(t,f(t))-e^{-\frac{\al(t-T_2)}{1-\al}}\, \DCF f(T_2),& t>T_2\\
f(t)=\begin{cases} f_2(t)\,& t\in (T_1,T_2]\\
f_1(t)\,& t\in [0,T_1]\end{cases}
 \end{cases}.
 \end{equation}
Also recalling that $f_1$ is a solution to problem (\ref{pb2}) and $f_2$ is a solution to problem (\ref{pb5}),  we have
\begin{equation*}
\begin{split}
\DCF f(T_2)=& \frac{1}{1-\al}\int_0^{T_2}f'(\tau)e^{-\frac{\al(T_2-\tau)}{1-\al}} \dd \tau\\
& =\frac{1}{1-\al}e^{-\frac{\al(T_2-T_1)}{1-\al}}\int_0^{T_1}f_1'(\tau)e^{-\frac{\al(T_1-\tau)}{1-\al}} \dd \tau+\frac{1}{1-\al}\int_{T_1}^{T_2}f_2'(\tau)e^{-\frac{\al(T_2-\tau)}{1-\al}} \dd \tau\\
& =  e^{-\frac{\al(T_2-T_1)}{1-\al}}\varphi(T_1,f(T_1))+\,{^C}{^F_{T_1}}D^\alpha f(T_2)\\
&= e^{-\frac{\al(T_2-T_1)}{1-\al}}\varphi(T_1,f(T_1))+ \varphi(T_2,f(T_2))-
 e^{-\frac{\al(T_2-T_1)}{1-\al}}\, \DCF f(T_1)\\
 & =\varphi(T_2,f(T_2))
\end{split}
\end{equation*} 

\noindent and then, problem (\ref{pb6}) can be written as 
\begin{equation}\label{pb7} \begin{cases}
 {^C}{^F_{T_2}}D^\alpha f(t)= \Phi_2(t,f(t)),& t>T_2\\
f(t)=\begin{cases} f_2(t)\,& t\in (T_1,T_2]\\
f_1(t)\,& t\in [0,T_1]\end{cases}
\end{cases}
 \end{equation}
where $\Phi_2(t,x)= \varphi(t,x)-e^{-\frac{\al(t-T_2)}{1-\al}}\varphi(T_2,f(T_2))\, $. Once again, by Theorem \ref{short-time-sol} there exists a unique solution to the sub-problem
\begin{equation}\label{pb8} \begin{cases}
 {^C}{^F_{T_2}}D^\alpha f(t)= \Phi(t,f(t)),& t>T_2\\
f(T_2)=f_2(T_2)\,& 
\end{cases}
 \end{equation}
for every $T_3$ such that 
\begin{equation}\label{desT_3}T_2<T_3<T_2+\frac{1-(1-\al)L}{\al L}.
\end{equation}
Calling $\Delta T $ to some positive constant such that $0<\Delta T< \frac{1-(1-\al)L}{\al L}$, successively applying the former procedure,  there exists  a continuous function $f_N$ which is the unique solution to 
\begin{equation}\label{pb9}
\begin{cases} \DCF f(t)= \varphi(t,f(t)), & 0<t<N\Delta T\\
f(0)=a_0
\end{cases}
\end{equation}
for every $N\in \bbN$. Being $N$ an arbitrary natural, the solution of problem (\ref{pb1}) is globally defined in time.
\end{proof}

\begin{remark} The effects of memory in the Caputo-Fabrizio derivative are shown in the need of considering the sub-problem (\ref{pb4}) with an ``initial condition'' which must be known all over the interval $[0,T_1]$, in contrast to the local property of the classical derivative which requires only the initial condition at the time $T_1$.
\end{remark}

\section{Conclusions}

We have analyzed and proved some useful properties related to the    fractional Caputo--Fabrizio derivative such as translation property, convergence to integer order derivatives and inverse operator. Also a  computation of this fractional derivative  to power functions, sin and cosine functions, and exponential function were given, attempting to provide, in each case,   expressions as simple as possible. Note that  the terms that converges to zero when $\al \nearrow 1$ were visually separated than the terms that converges to the classical derivatives. Finally,  an  existence and uniqueness of a global solution to a nonlinear fractional differential equation was proved.

\section{Acknowledgments}
\noindent The present work has been sponsored by the Projects PIP No. 0275 from CONICET-Univ. Austral, and ANPCyT PICTO Austral No. 0090 (Rosario, Argentina).

 \end{document}